\newtheorem{theorem}{Theorem}
\newtheorem{lemma}[theorem]{Lemma}
\newtheorem{question}[theorem]{Question}
\theoremstyle{definition}
\newtheorem{definition}[theorem]{Definition}
\newtheorem{example}[theorem]{Example}
\newtheorem{remark}[theorem]{Remark}
\newcommand{\co}{\colon\thinspace}
\newcommand{\eps}{\varepsilon}
\DeclareMathOperator{\Area}{Area}
\DeclareMathOperator{\Vol}{Vol}
\DeclareMathOperator{\diam}{diam}
\DeclareMathOperator{\Hom}{Hom}
\DeclareMathOperator{\sys}{sys}
\DeclareMathOperator{\cut}{cut}
\DeclareMathOperator{\polylog}{polylog}
\begin{document}

\title{Systolic almost-rigidity modulo 2}
\author{Hannah Alpert}
\address{Auburn University, 221 Parker Hall, Auburn, AL 36849}
\email{hcalpert@auburn.edu}
\author{Alexey Balitskiy}
\address{Institute for Advanced Study, 1 Einstein Drive, Princeton, NJ 08540, and University of Luxembourg, 6 avenue de la Fonte, L-4364 Esch-sur-Alzette, Luxembourg}
\email{alexey.balitskiy@uni.lu}
\author{Larry Guth}
\address{Massachusetts Institute of Technology, 77 Massachusetts Ave, Cambridge, MA 02139}
\email{lguth@math.mit.edu}
\subjclass[2010]{53C23}
\keywords{Systole, systolic freedom}

\begin{abstract}
No power law systolic freedom is possible for the product of mod~$2$ systoles of dimension $1$ and codimension $1$. This means that any closed $n$-dimensional Riemannian manifold $M$ of bounded local geometry obeys the following systolic inequality: the product of its mod~$2$ systoles of dimensions $1$ and $n-1$ is bounded from above by $c(n,\eps) \Vol(M)^{1+\eps}$, if finite (if $H_1(M; \mathbb{Z}/2)$ is non-trivial).
\end{abstract}

\maketitle

\section{Results}
\label{sec:intro}

Let $M$ be a finite simplicial complex whose geometric realization is a closed $n$-dimensional manifold. Denote by $\Vol(M)$ the number of $n$-simplices in $M$. The \emph{$k$-dimensional mod~$2$ systole} $\sys_k(M)$ is defined as the minimal number of $k$-simplices in a simplicial $k$-cycle representing a nontrivial homology class in $H_k(M; \mathbb{Z}/2)$. If $H_k(M; \mathbb{Z}/2)$ is trivial, then $\sys_k(M) = +\infty$ by convention.

The following systolic inequality (which goes back to Loewner in the case of two-dimensional torus) seems natural to conjecture: 
\begin{equation*}
\label{eq:sys}
\sys_k(M) \cdot \sys_{n-k}(M) \le c_n \sys_n(M) = c_n \Vol(M),  \tag{$\star$}
\end{equation*}
where $H_k(M)$ is assumed to be non-trivial to make the left-hand side finite. 
The phenomenon of \emph{systolic freedom} is the failure of~\eqref{eq:sys} for $n\ge 3$. For the \emph{integral} systoles (defined similarly to the mod~$2$ systoles, but with $\mathbb{Z}$ instead of $\mathbb{Z}/2$), this phenomenon was pioneered by Gromov~\cite{gromov1996systoles}. In the integral case, there are numerous counterexamples~\cite{babenko1998systolic, katz1999volume} to~\eqref{eq:sys} in all dimensions $n \ge 3$, even if the constant $c_n$ is allowed to depend on the topological type of $M$. For the mod~$2$ systoles, the same question is much more subtle, as the mod~$2$ systoles are often much smaller than the integral ones. It was Freedman~\cite{freedman1999z2, freedman2002z2} who constructed the first counterexamples to \eqref{eq:sys} over $\mathbb{Z}/2$, with $k=1$ and all $n \ge 3$. In his family of examples, the \emph{systolic ratio} $\frac{\sys_1(M) \cdot \sys_{n-1}(M)}{\Vol(M)}$ has the magnitude of $\sqrt{\log(\Vol(M))}$; these are examples of \emph{polylog systolic freedom}. We prove that they are (almost) tight in the sense that \emph{no power law systolic freedom} can be observed for $k=1$.

\begin{theorem}
\label{thm:sys-first}
For each $\eps > 0$, there is a constant $c = c(n,\eps)$ such that any triangulated $n$-dimensional manifold $M$ with non-trivial first homology $H_1(M; \mathbb{Z}/2)$ obeys the following systolic estimate:
\[
\sys_1(M) \cdot \sys_{n-1}(M) \le c \Vol(M)^{1+\eps}.
\]
\end{theorem}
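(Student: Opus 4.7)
The plan is to bootstrap on covers, exploiting the $\mathbb{Z}/2$ structure of $\alpha$ via a Federer--Fleming-type deformation on the associated double cover. Write $L = \sys_1(M)$, $A = \sys_{n-1}(M)$, $V = \Vol(M)$; the goal is $A \le c(n,\eps) V^{1+\eps}/L$. The class $\alpha \in H^1(M;\mathbb{Z}/2)$ classifies a connected double cover $\pi \co \tilde M \to M$ with deck involution $\tau$, and its Poincar\'e dual is realized by an $(n-1)$-cycle $\Sigma$ with $|\Sigma|=A$; equivalently, there is an $n$-chain $R\subset \tilde M$ with $\partial R = \pi^{-1}\Sigma$ and $\tau R$ complementary to $R$. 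The fundamental feature we will exploit is that any cycle in $\tilde M$ projects to a cycle in $M$, but also that, by pairing with $\tau$, ``symmetrized'' cycles in $\tilde M$ descend to mod~$2$ cycles in $M$ with essentially the same size.

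The first main step I would pursue is a \emph{width lemma} on $\tilde M$: if $\sys_1(M)\ge L$, then inside $\tilde M$ one can deform the $n$-chain $R$ to an $n$-chain $R'$ whose boundary is much smaller than $A$, contradicting the minimality of $\Sigma$ unless $V/L$ is large. Concretely, one considers the distance function to $R$ in $\tilde M$ and performs a coarea-style averaging over sublevel sets, choosing a favorable level $r$ at which $|\partial N_r|$ is small relative to the volume $|N_r|$. The lower bound $\sys_1(M)\ge L$ is what prevents short loops from appearing in $\tilde M$ after the deformation, and the mod~$2$ hypothesis is what lets us push $\partial N_r$ down to a competitor for $\Sigma$ in $M$. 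A first run of this should give a crude polynomial-loss bound $\sys_1\cdot\sys_{n-1}\le C_n V^{1+\delta_n}$ for some fixed $\delta_n > 0$ depending only on the dimension.

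To sharpen $\delta_n$ to an arbitrary $\eps$, I would iterate on a tower of double covers. Each cover $\tilde M$ still carries a nonzero class in $H^1(\tilde M;\mathbb{Z}/2)$ in the relevant cases (either a pulled-back class or one produced by the transfer), and one can re-apply the width lemma to it. Tracking how the systoles, the volume, and the systolic ratio transform under each step, a standard amplification in systolic geometry converts the fixed exponent loss $\delta_n$ into an arbitrarily small $\eps$, at the cost of a constant $c(n,\eps)$ that blows up as $\eps\to 0$; the bookkeeping here should be routine once the one-step estimate is in hand.

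The main obstacle is the width lemma itself. Freedman's polylog-freedom examples achieve $\sys_1\cdot\sys_{n-1}\sim V\sqrt{\log V}$, so any quantitative deformation we use must be sharp enough to be compatible with the polylog range but still strong enough to rule out a power law. The delicate point is calibrating how the ``softness'' offered by mod~$2$ cancellation on $\tilde M$ balances against the rigidity forced by the nontrivial class: there must be enough room to push $\partial R$ inward, but not so much that one could repeat the argument to get a $V^{1+o(1)}$-free example. I expect the engine to be a mod~$2$ analogue of the Uryson-width/volume-systole estimates, specialized to double covers, and this is where the bulk of the technical work will lie.
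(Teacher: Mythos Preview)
Your plan has a genuine gap at the amplification step, and the one-step estimate is not really there either.

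First, the tower-of-covers bootstrap does not work as stated. The double cover $\tilde M$ classified by $\alpha$ is precisely the cover on which $\pi^*\alpha = 0$, so the class you started with is gone. You write that ``either a pulled-back class or one produced by the transfer'' will survive, but neither is guaranteed: for instance if $M = \mathbb{RP}^n$ with $n$ odd, then $\tilde M = S^n$ has $H^1(\tilde M;\mathbb{Z}/2)=0$ and the iteration stops after one step. More importantly, even when $\tilde M$ does carry a nonzero class, the bookkeeping does not improve the exponent. Passing to a finite cover multiplies the volume by the degree, while $\sys_1$ can at most multiply by the degree and $\sys_{n-1}$ need not grow at all (and may drop). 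So applying a hypothetical bound $\sys_1\cdot\sys_{n-1}\le C\Vol^{1+\delta}$ on $\tilde M$ and pushing down gives, at best, the same exponent $1+\delta$ on $M$ with a worse constant. Amplification tricks that shrink exponents require an operation under which the systolic ratio \emph{grows}; covering does the opposite here.

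Second, your ``width lemma'' is only a wish. The coarea/level-set idea you sketch is the standard first move, but by itself it gives at best a bound of the shape $\sys_1\cdot\sys_{n-1}\lesssim V\cdot(\text{something})$ with no control on the ``something'' beyond trivial bounds, and you do not indicate the mechanism that would force any fixed $\delta_n<1$. The paper's proof does use a coarea inequality and a cut-and-paste competitor argument (in spirit close to your $R\mapsto N_r$), but the step that actually produces an arbitrarily small $\eps$ is a multi-scale ``good ball'' argument: one looks at radii $r_0,4r_0,\ldots,4^N r_0$ in the window $[R^{1-\eps},R]$, where $R=\sys^\alpha$, and shows that if no scale is ``good'' then $h(x,\cdot)$ grows super-polynomially, contradicting $\Area H<R^{1/\eps}$. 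This doubling/pigeonhole over scales is the engine; there is no cover iteration, and the minimal object is a cutting hypersurface in $M$ itself (shown \emph{a posteriori} to be a cycle in the Poincar\'e-dual class), not a chain on $\tilde M$. Your outline is missing exactly this ingredient.
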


This contrasts with the case of $\mathbb{Z}$ coefficients, in which there is \emph{no almost-rigidity} as in Theorem~\ref{thm:sys-first}: Gromov's examples already exhibit \emph{power law systolic freedom}, and there are examples of \emph{exponential systolic freedom}~\cite{pittet1997systoles}.

As for systolic inequalities over $\mathbb{Z}/2$ with $k\ge2$, there is an estimate $\sys_k(M) \cdot \sys_{n-k}(M) \le c_n \Vol(M)^2$, following from the trivial observation that the number of $k$-simplices in $M$ is at most ${n+1 \choose k+1} \Vol(M)$. The outstanding result of~\cite{freedman2021building} yields families of $11$-dimensional triangulated manifolds in which $\sys_4(M) \cdot \sys_7(M)$ grows faster than $\Vol(M)^{2-\eps}$. The idea behind this result owes a lot to the relation between the systolic freedom over $\mathbb{Z}/2$ and quantum error correction codes. Having a cellulated manifold exhibiting systolic freedom at the level of $k\textsuperscript{th}$ homology, one can consider the three-term portion of its cellular chain complex around the $k\textsuperscript{th}$ term, and use it to build a quantum code with error correction properties depending on the systolic freedom of the manifold (see~\cite{freedman2002z2} for the details). The idea of~\cite{freedman2021building} is to ``reverse-engineer'' manifolds starting from the breakthrough quantum error correction codes of~\cite{hastings2021fiber, panteleev2021quantum}. 
In terms of this quantum analogy, Theorem~\ref{thm:sys-first} says that for $k=1$, manifolds do not produce any outstanding codes. With our methods, we cannot say anything for $k = 2, 3$.

\subsection*{Systolic inequalities in the discrete setting}
We will prove a discrete systolic estimate for general simplicial complexes rather than triangulated manifolds. The systoles will be substituted by certain larger systolic invariants, making the inequality stronger. 

\begin{itemize}
    \item For a non-zero homology class $a \in H_k(M; \mathbb{Z}/2)$, define $\sys_a(M)$ as the minimal number of $k$-simplices in a simplicial $k$-cycle representing $a$.
    \item For a non-zero cohomology class $\alpha \in H^k(M; \mathbb{Z}/2)$, define $\sys^\alpha(M)$ as the minimal number of $k$-simplices in a simplicial $k$-cycle detected by $\alpha$.
    \item For a non-zero cohomology class $\alpha \in H^k(M; \mathbb{Z}/2)$, define $\cut^\alpha(M)$ as the minimal number of $(n-k)$-simplices in an $(n-k)$-dimensional subcomplex $H$ that \emph{cuts} $\alpha$ in the sense that $\alpha$ vanishes when restricted to $M\setminus H$ (in this difference $M$ and $H$ are understood as geometric realizations, not the combinatorial complexes); in other words, no singular $k$-cycle detected by $\alpha$ survives if we cut $M$ along $H$ (hence the name). 
\end{itemize}

\begin{theorem}
\label{thm:sys-discrete}
For each $\eps > 0$, there is a constant $c = c(n,\eps)$ such that any finite $n$-dimensional simplicial complex $M$ and any non-zero class $\alpha \in H^1(M; \mathbb{Z}/2)$ obey the following estimate:
\[
\sys^\alpha(M) \cdot \cut^{\alpha}(M) \le c \Vol(M)^{1+\eps}.
\]
\end{theorem}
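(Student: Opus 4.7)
My plan is to pass to the double cover $\pi \co \tilde M \to M$ associated to $\alpha \in H^1(M;\mathbb{Z}/2)$, with deck involution $\tau$. In this reformulation, $\sys^\alpha(M)$ becomes the minimum combinatorial edge-distance from a vertex of $\tilde M$ to its $\tau$-image, while $\cut^\alpha(M)$ becomes half the minimum number of $(n-1)$-simplices in a $\tau$-invariant subcomplex $\tilde H \subset \tilde M$ whose complement is disconnected (so that $\tau$ necessarily swaps the two components). Writing $L = \sys^\alpha(M)$, $C = \cut^\alpha(M)$, and $V = \Vol(M)$, the goal becomes $LC \le c_{n,\eps} V^{1+\eps}$.

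The regime $L \le V^\eps$ is trivial: the full $(n-1)$-skeleton of $\tilde M$ already serves as a cut of size $\le (n+1)V$. So I may assume $L \ge V^\eps$. The natural next move is to construct a small cut by a $\tau$-equivariant Voronoi-type decomposition of $\tilde M$ at scale $\asymp L$: pick a $\tau$-symmetric maximal $L/10$-separated set of seeds, form (possibly randomized) cells of diameter strictly less than $L$, and take $\tilde H$ to be the union of their boundaries. Because no cell can contain both a vertex $v$ and $\tau v$, the involution $\tau$ swaps cells in pairs, so $\tilde H$ genuinely separates the two sheets. A random-radius boundary estimate in the spirit of Lee--Naor padded partitions gives an expected boundary size of $\mathbb{E}|\tilde H| \lesssim V/L$, which would yield $LC \lesssim V$ in the optimistic case.

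The principal obstacle is that this clean analysis does not port directly to simplicial complexes. Combinatorial distances are integer-valued, ``balls of radius $r$'' are coarse at small scales, and single-scale isoperimetry need not hold without further assumptions on local geometry. I would therefore expect the proof to build $\tilde H$ by iterating the partition over $O(1/\eps)$ geometric scales $r_0 < r_1 < \cdots < r_K \asymp L$: at each scale, cleanly separated regions are peeled off, and the residual is re-cut at the next scale. Each scale contributes a bounded multiplicative loss, which compounds to the $V^\eps$ slack in the theorem. Making this hierarchy both $\tau$-equivariant and genuinely sheet-separating is the combinatorial heart of the argument; it is precisely where the simplicial-complex (rather than triangulated-manifold) setting of Theorem~\ref{thm:sys-discrete} forces a more flexible construction than in the Riemannian analogue, and where I expect the use of the somewhat subtle definition of $\cut^\alpha$ via singular chains to become essential.
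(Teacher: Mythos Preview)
Your plan takes a genuinely different route from the paper's, and as written it has a real gap. The paper does not pass to the double cover and does not try to \emph{construct} a small cutting complex at all. Instead it fixes $H$ to be a \emph{minimizer} of $\cut^\alpha$ and extracts from minimality the crucial local inequality
\[
\Area\bigl(H\cap B(x,r)\bigr)\ \le\ \Area S(x,r)\qquad\text{for all }r<\tfrac{\sys^\alpha-1}{2},
\]
via a cut-and-paste lemma: if this failed one could replace $H\cap B(x,r)$ by the sphere $S(x,r)$ and strictly shrink $H$ while still cutting $\alpha$. Combined with coarea, this converts $H$-area inside a ball into ambient volume. A telescoping argument over roughly $\eps\log R$ geometric scales $4^k,4^{k+1},\ldots$ inside $[R^{1-\eps},R]$ (with $R=\sys^\alpha$) then locates around every $x\in H$ a ``good'' ball where $h(x,2r)\lesssim R^{\eps-1}\,v(x,r)$, and a Vitali subcover of $H$ by such balls gives $R\cdot\Area H\lesssim V^{1+\eps}$.

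The gap in your approach is exactly where you flag it but do not fix it. With no bounded-geometry hypothesis, the balls $B(p_i,L/5)$ around an $L/10$-net can overlap with unbounded multiplicity, so $\sum_i\Vol B(p_i,L/5)$ is not controlled by $V$ and the Lee--Naor style estimate $\mathbb{E}\lvert\tilde H\rvert\lesssim V/L$ does not follow. Your multi-scale patch is too vague to evaluate: ``each of $O(1/\eps)$ scales contributes a bounded multiplicative loss'' would yield only a constant $c(\eps)$, not the $V^\eps$ factor, and you do not specify what gets peeled off at each scale or why the residual shrinks. The paper's minimality trick is precisely the substitute for doubling that you are missing: it lets one charge $H$-area against \emph{disjoint} ambient volume via Vitali on $H$, never needing to sum overlapping ball volumes over all of $M$. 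A sequential region-growing argument in the Leighton--Rao spirit might in principle be pushed through constructively, but the combinatorics of coarea after excising earlier balls is delicate in this simplicial setting, and your proposal does not attempt it.
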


To relate $\cut^\alpha$ with better known systolic quantities (and deduce Theorem~\ref{thm:sys-first}), we make two remarks.

\begin{enumerate}
    \item If we know that there is a non-trivial $(n-1)$-cycle inside the minimizing complex $H$ from the definition of $\cut^\alpha$, then we get a systolic estimate. This can be achieved by assuming that there is a cohomology class $\beta \in H^{n-1}(M; \mathbb{Z}/2)$ such that $\alpha \smile \beta \neq 0$. The Lusternik--Schnirelmann lemma applies: since $\alpha$ vanishes on the complement of $H$, $\beta$ restricts to $H$ non-trivially. Therefore, there is a non-trivial $(n-1)$-cycle in $H$ detected by $\beta$, and Theorem~\ref{thm:sys-discrete} implies that
\[
\sys^{\alpha}(M) \cdot \sys^{\beta}(M) \le c \Vol(M)^{1+\eps}.
\]
    We comment that the inequality $\sys^{\beta}(M) \le \cut^\alpha(M)$ can be strict, even for manifolds (see Example~\ref{ex:sys<cut}). 
    \item In case when $M$ is a (triangulated) manifold, it is easy to see that any cycle representing $\alpha^* \in H_{n-1}(M; \mathbb{Z}/2)$---the Poincar\'e dual class to $\alpha$---serves as an admissible cutting complex in the definition of $\cut^\alpha$, and $\cut^\alpha(M) \le \sys_{\alpha^*}(M)$. Interestingly, the minimal $H$ turns out to be a cycle representing $\alpha^*$, as we will show in Lemma~\ref{lem:regularity}. Therefore, $\cut^\alpha(M) = \sys_{\alpha^*}(M)$, and Theorem~\ref{thm:sys-discrete}, rephrased for manifolds, says that
\[
\sys^{\alpha}(M) \cdot \sys_{\alpha_*}(M) \le c \Vol(M)^{1+\eps}.
\]
    This estimate clearly implies Theorem~\ref{thm:sys-first}.
\end{enumerate}

\subsection*{Systolic inequalities in the continuous setting}

Let $M$ be a closed $n$-dimensional Riemannian manifold. The \emph{$k$-dimensional mod~$2$ systole} is defined as the infimum of the $k$-volumes of all (piecewise smooth or Lipschitz) $k$-cycles representing nontrivial homology $H_k(M; \mathbb{Z}/2)$. 

Following Freedman, we measure the \emph{systolic freedom} of a sequence of manifolds $M_i$ with volumes $V_i = \Vol(M_i)$ and non-trivial $H_k$, as follows. We scale the Riemannian metric of each $M_i$, if needed, to make its geometry \emph{locally bounded}: this means that the sectional curvatures are between $-1$ and $1$ everywhere, and the injectivity radius is at least $1$. After that we measure the \emph{systolic ratio} $\frac{\sys_k(M_i) \cdot \sys_{n-k}(M_i)}{V_i}$ and express its growth as $i \to \infty$ as a function of $V_i$. The faster this function grows the more freedom the manifolds exhibit. A constant function corresponds to the \emph{systolic rigidity}. Freedman's examples had the growth rate of $\sqrt{\log(\cdot)}$. Our main result implies that for $k=1$ the systolic ratio cannot grow faster than $(\cdot)^{\eps}$, for arbitrarily small $\eps > 0$.

\begin{theorem}
\label{thm:sys-continuous}
For each $\eps > 0$, there is a constant $c = c(n,\eps)$ such that any closed Riemannian $n$-manifold $M$ of bounded local geometry and with non-trivial $H_1(M; \mathbb{Z}/2)$ obeys the following systolic estimate:
\[
\sys_1(M) \cdot \sys_{n-1}(M) \le c \Vol(M)^{1+\eps}.
\]
\end{theorem}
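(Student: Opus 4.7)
The plan is to reduce Theorem~\ref{thm:sys-continuous} to the discrete inequality of Theorem~\ref{thm:sys-discrete} via a bounded-geometry triangulation of $M$. First, I would invoke the standard fact that any closed Riemannian $n$-manifold with sectional curvature in $[-1,1]$ and injectivity radius at least $1$ admits a smooth triangulation $T$ together with constants $C_n, c_n > 0$, depending only on $n$, such that $T$ has at most $C_n \Vol(M)$ top-dimensional simplices, and such that every $k$-simplex of $T$ (for all $0 \le k \le n$) has $k$-volume between $c_n$ and $C_n$. Such triangulations can be produced, for example, by taking a maximal $1$-separated net in $M$, forming the nerve of the associated Voronoi decomposition via the exponential map, and subdividing it to a simplicial complex whose simplices are uniformly non-degenerate.

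Next, I would translate the Riemannian systoles of $M$ into discrete invariants of $T$. Since $H_1(M; \mathbb{Z}/2) \ne 0$, universal coefficients furnishes a non-zero class $\alpha \in H^1(T; \mathbb{Z}/2)$. A simplicial $1$-cycle detected by $\alpha$ with the minimum number $\sys^\alpha(T)$ of edges is non-trivial in $H_1(M; \mathbb{Z}/2)$, and its Riemannian length is at most $C_n \sys^\alpha(T)$; hence $\sys_1(M) \le C_n \sys^\alpha(T)$. By remark~(2) after Theorem~\ref{thm:sys-discrete}, the fact that $T$ is a triangulated manifold gives $\cut^\alpha(T) = \sys_{\alpha^*}(T) \ge \sys_{n-1}(T)$, where $\alpha^*$ is the Poincar\'e dual of $\alpha$, and a minimizer for the simplicial $\sys_{n-1}(T)$ is a piecewise-smooth $(n-1)$-cycle in $M$ of volume at most $C_n \sys_{n-1}(T)$, whence $\sys_{n-1}(M) \le C_n \cut^\alpha(T)$. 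Combining these comparisons with Theorem~\ref{thm:sys-discrete} applied to $(T, \alpha)$ yields
\[
\sys_1(M) \cdot \sys_{n-1}(M) \le C_n^2 \, \sys^\alpha(T) \cdot \cut^\alpha(T) \le C_n^2 \, c(n, \eps) \Vol(T)^{1+\eps} \le c'(n, \eps) \Vol(M)^{1+\eps},
\]
which is the desired estimate.

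The only genuinely geometric content of the argument sits in the triangulation step. The upper bounds on simplex diameters, and hence on $k$-volumes, follow easily from any locally finite net; the subtler requirement is the uniform lower bound on $k$-volumes for $k < n$, which prevents faces from collapsing and is what permits the comparison between the simplicial and Riemannian systoles at both ends. I expect this to be the only step in the argument that requires genuine care to write out, but it is standard in the systolic literature and can be imported as a black box. Beyond that, the proof is a purely formal application of Theorem~\ref{thm:sys-discrete}.
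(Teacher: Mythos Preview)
Your proposal is correct and follows essentially the same route as the paper: triangulate $M$ using bounded local geometry, then reduce to the discrete estimate (the paper cites Theorem~\ref{thm:sys-first}, you cite Theorem~\ref{thm:sys-discrete} together with remark~(2), which is equivalent). The only cosmetic difference is that the paper outsources the comparison of continuous and discrete systoles to the bi-Lipschitz triangulation results of Cairns--Whitehead--Bowditch and to \cite[Theorem~1.1.1]{freedman2021building}, whereas you argue the one-sided comparison $\sys_k(M)\le C_n\sys_k(T)$ directly from uniform bounds on $k$-volumes of simplices; your version is marginally more self-contained but amounts to the same thing.
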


To deduce Theorem~\ref{thm:sys-continuous} from Theorem~\ref{thm:sys-first}, we triangulate $M$ carefully, and replace the continuous systoles with the discrete ones.
It is known~\cite{cairns1934triangulation, whitehead1940c1, dyer2015riemannian, boissonnat2018delaunay, bowditch2020bilipschitz, boissonnat2023local} that a Riemannian manifold $M$ of bounded local geometry can be triangulated so that if we endow each simplex with the standard euclidean metric with edge-length $1$, then the resulting metric on $M$ is bi-Lipschitz to the original one (with the Lipschitz constant depending only on $n$). In particular, the number of $n$-simplices approximately equals $\Vol(M)$ (maybe off by a dimensional factor). For a sequence of Riemannian manifolds of bounded local geometry, we have two ways to measure its systolic freedom: via the growth rate of the continuous or discrete systolic ratio. It turns out they have the same magnitude~\cite[Theorem~1.1.1]{freedman2021building}. The non-obvious part of this claim is that a systolic cycle in the continuous setting can be efficiently approximated by a discrete one in the triangulated manifold, without increasing its volume too much. This is done via a ``Federer--Fleming'' type of argument: inside each cell of the triangulation, the cycle is projected to the boundary radially from a random point; then this is repeated for cells of lower dimensions, until the cycle is approximated by a discrete one (see~\cite{freedman2021building} for details). Therefore, Theorem~\ref{thm:sys-first} implies Theorem~\ref{thm:sys-continuous}. 
We remark that manifolds of bounded local geometry, when triangulated as above, have an additional property of bounded ``degree'', in the sense that every vertex of the triangulation is incident to a uniformly bounded number of top-dimensional simplices; this property is not needed in Theorem~\ref{thm:sys-first}, making it somewhat stronger than the continuous version.

Theorem~\ref{thm:sys-continuous} can also be strengthened in a different direction: if we stay in the continuous realm, but replace the condition of bounded local geometry by its ``macroscopic'' cousin as follows. Given $0 < v_1 < v_2$, let us say that a Riemannian manifold $M$ has \emph{$(v_1,v_2)$-bounded macroscopic geometry} if
\begin{itemize}
    \item $\sys_1(M) \ge 2$,
    \item every ball of radius $1/4$ has volume at least $v_1$,
    \item and every ball of radius $1$ has volume at most $v_2$.
\end{itemize}
These three conditions should be viewed as the analogues of a lower bound on injectivity radius, upper bound of sectional curvature, and lower bound on sectional curvature, respectively.
\begin{theorem}
\label{thm:sys-macrocontinuous}
For each $\eps, v_1, v_2 > 0$, there is a constant $c = c(\eps, v_1,v_2)$ such that any compact Riemannian $n$-manifold $M$ of $(v_1,v_2)$-bounded macroscopic geometry exhibits systolic almost-rigidity:
\[
\sys_1(M) \cdot \sys_{n-1}(M) \le c \Vol(M)^{1+\eps}.
\]
\end{theorem}
This theorem is more general than Theorem~\ref{thm:sys-continuous}. Indeed, on a manifold of bounded local geometry, the systolic bound $\sys_1 \ge 2$ follows trivially from the bound on the injectivity radius, and volumetric estimates are mere corollaries of the Rauch comparison theorem (see, e.g., \cite[Section~7.1.1]{berger2003panoramic}).

\begin{remark}
    The referee asked us about the minimal set of conditions implying almost-rigidity. Namely, what assumptions one can impose on a class of manifolds to prohibit the following scenario: there exists $\eps >0$ and a sequence of manifolds $M_i$ satisfying these assumptions, with $\Vol M_i \to \infty$, and with $\sys_1(M_i) \cdot \sys_{n-1}(M_i) > c \Vol(M_i)^{1+\eps}$? We do not know if any of the three conditions of bounded macroscopic geometry in Theorem~\ref{thm:sys-macrocontinuous} can be omitted. We remark that the condition ``every ball of radius $1/4$ has volume at least $v_1$'' is not sufficient by itself. To see that, recall that Freedman constructed a sequence of three-dimensional manifolds $M_g$ (with an integer parameter $g$, which can be arbitrarily large), which have bounded local geometry, $\diam (M_g) \sim \log g$, $\sys_1 (M_g) \sim \sqrt{\log g}$, $\sys_{2} (M_g) \sim g$, $\Vol (M_i) \sim g$. They exhibit polylog systolic freedom. Scaling down the metric on $M_g$ by a factor of $g^{1/3}/ \log g$, we obtain a sequence of manifolds $M_g'$ with volumes going to infinity, and diameters going to zero. So a lower bound on the volumes of balls of radius $1/4$ is present, but the sequence now has power law systolic freedom: $\sys_1(M_g') \cdot \sys_{2}(M_g') \sim \Vol(M_g')^{7/6}$.
\end{remark}
\begin{question}
Does it suffice to assume that $\sys_1 \ge 2$, to guarantee systolic almost-rigidity? In other words, is there a sequence of manifolds $M_i$ with $\sys_1(M_i) \ge 2$ and $\Vol M_i \to \infty$, such that $\sys_1(M_i) \cdot \sys_{n-1}(M_i) > c \Vol(M_i)^{1+\eps}$ (for some $\eps >0$)? If the bound on the $1$-systole alone is not enough, does it suffice to assume both $\sys_1 \ge 2$, and a lower bound on the volumes of balls of radius $1/4$?
\end{question}

\subsection*{Structure of the paper}

The proofs of discrete systolic estimates (mainly, of Theorem~\ref{thm:sys-discrete}) are explained in Section~\ref{sec:proof}. The core idea is the ``Schoen--Yau minimal surface'' approach, after \cite{guth2010systolic, papasoglu2020uryson}. 

Some further generalizations are considered in Section~\ref{sec:generalize}. There we deal with the product of more than two systoles, and with a ``macroscopic'' systolic estimate, implying Theorem~\ref{thm:sys-macrocontinuous}.

\subsection*{Acknowledgments.} We thank the anonymous referee whose suggestions motivated us to highlight the version of the main theorem under ``macroscopic'' assumptions. This material is based in part upon work supported by the National Science Foundation under Grant No.~DMS-1926686. Larry Guth is supported by Simons Investigator Award. 

\section{Proofs in the discrete case}
\label{sec:proof}

Let $M$ be an $n$-dimensional simplicial complex. The distance between vertices is measured as the edge-length of the shortest path between them in the $1$-skeleton of $M$. 
\begin{definition}
\label{def:ball-sphere}
Let $x$ be a vertex of $M$, and let $r$ be a non-negative integer.
\begin{enumerate}
    \item The \emph{ball} $B(x,r)$ is the subcomplex of $M$ defined as follows. If $r=0$, $B(x,0) = \{x\}$, so assume $r \ge 1$. An inclusion-maximal $k$-simplex, $0 \le k \le n$, is in $B(x,r)$ if all its vertices are distance at most $r$ away from $x$ and at least one of the vertices is at distance less than $r$ from $x$. An arbitrary simplex is in $B(x,r)$ if it is a face of an inclusion-maximal simplex as in previous sentence.
    \item The \emph{sphere} $S(x,r)$ is the subcomplex $M$ consisting of all simplices of $B(x,r)$ with vertices at distance exactly $r$ from $x$.
\end{enumerate}
\end{definition}

This definition might seem confusing, but it essentially defines spheres and balls as the level and sublevel sets of the distance function from $x$ measured in the geometric realization of $M$ with an auxiliary piecewise-Finsler metric $F$, described as follows. For each $k \le n$, consider a $k$-simplex $\triangle^k$ in a linear $k$-dimensional space and endow it with the norm whose unit ball is the Minkowski sum $\triangle^k + (- \triangle^k)$. In particular, the distance between any disjoint faces of $\triangle^k$ is $1$. Now we define $F$ by saying that its restriction to any $k$-simplex of $M$ is a flat Finsler metric isometric to $\triangle^k$. 

From this interpretation it is seen that every sphere actually separates the inside from the outside: every curve starting in $B(x,r)$ and ending outside $B(x,r)$ must intersect $S(x,r)$.

We write $\Vol$ for the number of $n$ simplices in an $n$-complex, and $\Area$ for the number of $(n-1)$-simplices in an $(n-1)$-complex.

Everywhere below $\alpha$ denotes a non-trivial class in $H^1(M;\mathbb{Z}/2)$; $H$ denotes an $(n-1)$-dimensional subcomplex of $M$.  

\begin{lemma}[Co-area inequality]
\label{lem:coarea}
$\Vol B(x,r) \ge \sum\limits_{i=1}^{r} \Area S(x,i)$.
\end{lemma}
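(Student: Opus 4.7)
The plan is to prove a telescoping form of the inequality: for each $i \in \{1, \ldots, r\}$ I will show that the number of $n$-simplices in $B(x,i) \setminus B(x,i-1)$ is at least $\Area S(x,i)$. Summing over $i$ and using $\Vol B(x,0) = 0$ will then yield the stated bound.

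To carry out each telescoping step, I will construct an injection $\phi_i$ from the $(n-1)$-simplices of $S(x,i)$ into the $n$-simplices of $B(x,i) \setminus B(x,i-1)$. Given $\tau \in S(x,i)$, all $n$ of its vertices lie at distance exactly $i$ from $x$. Since $\tau$ cannot itself be inclusion-maximal in $B(x,i)$---its vertices sit at distance $i$, violating the ``some vertex at distance $<i$'' clause---the definition forces $\tau$ to be a proper face of an inclusion-maximal simplex of $B(x,i)$, which in an $n$-dimensional complex must be an $n$-simplex $\sigma$. I set $\phi_i(\tau) := \sigma$ (any such choice will do). The remaining vertex $w \in \sigma \setminus \tau$ must then satisfy $d(w,x) \le i-1$, since the other $n$ vertices of $\sigma$ sit at distance $i$ and $\sigma \in B(x,i)$ demands a vertex at distance strictly less than $i$. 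Moreover $\sigma \notin B(x,i-1)$, because $\sigma$ retains a vertex of $\tau$ at distance $i > i-1$.

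The injectivity of $\phi_i$ is the key observation. If two distinct $(n-1)$-faces $\tau \neq \tau'$ of one $\sigma$ both lay in $S(x,i)$, they would omit different vertices of $\sigma$ and together exhaust all $n+1$ of its vertices. Then every vertex of $\sigma$ would sit at distance exactly $i$, contradicting the required existence of a vertex of $\sigma$ at distance $<i$. Therefore no two $\tau$'s can share the same image under $\phi_i$. Since the sets $B(x,i) \setminus B(x,i-1)$ are pairwise disjoint, assembling the $\phi_i$'s into a single map embeds the disjoint union $\bigsqcup_{i=1}^r \{(n-1)\text{-simplices of } S(x,i)\}$ into the $n$-simplices of $B(x,r)$, which is exactly the claimed inequality.

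I expect no serious obstacle. The only point that warrants a bit of care is the assertion that each $(n-1)$-simplex of $S(x,i)$ is necessarily a face of an $n$-simplex in $B(x,i)$ (rather than being inclusion-maximal in $M$ as a lower-dimensional simplex); but the ``distance $<i$'' clause makes this automatic, so the map $\phi_i$ is well-defined.
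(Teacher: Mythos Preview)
Your proof is correct and follows exactly the paper's approach: assign to each $(n-1)$-simplex of $S(x,i)$ an incident $n$-simplex in $B(x,i)$ and observe that these are all distinct. Your version is simply more explicit, spelling out the telescoping via the layers $B(x,i)\setminus B(x,i-1)$ and the injectivity argument (that an $n$-simplex of $B(x,i)$ can have only one $(n-1)$-face with all vertices at distance exactly $i$); one tiny wording quibble is that ``inclusion-maximal'' in the definition refers to maximality in $M$, not in $B(x,i)$, but your reasoning uses it correctly.
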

\begin{proof}
To each $(n-1)$-simplex in $S(x,i)$ we can assign an incident $n$-simplex in $B(x,i)$, and all those $n$-simplices are distinct. 
\end{proof}

\begin{lemma}[Vitali cover]
\label{lem:vitali}
Suppose we have a ball $B(x_i, 2\rho_i)$ at each vertex of an $(n-1)$-complex $H$ inside $M$. Then one can select a subcollection of those balls that covers $H$ but such that the corresponding balls $B(x_i, \rho_i)$ do not overlap (no $n$-simplex belongs to two balls from the subcollection).
\end{lemma}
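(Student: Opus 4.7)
The plan is to run the classical Vitali-style greedy selection, adapted to the simplicial framework of Definition~\ref{def:ball-sphere}. First, I would reindex so that the radii are non-increasing, $\rho_1 \ge \rho_2 \ge \cdots$, breaking ties arbitrarily. Then, sweeping through the balls in this order, I maintain a growing subcollection $\mathcal{S}$ and add the current ball $B(x_i, 2\rho_i)$ to $\mathcal{S}$ exactly when the shrunk version $B(x_i, \rho_i)$ shares no $n$-simplex with any $B(x_j, \rho_j)$ for $j$ already in $\mathcal{S}$. The non-overlap of the shrunk balls $B(x_j, \rho_j)$ for $j \in \mathcal{S}$ is then immediate from the selection rule.

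The real content of the lemma is to verify that $\bigcup_{j \in \mathcal{S}} B(x_j, 2\rho_j) \supseteq H$. I would argue vertex by vertex. For a vertex $x_i$ of $H$, either $i \in \mathcal{S}$ and we are done, or $i$ was rejected, meaning some previously selected $j \in \mathcal{S}$ blocked it. By the ordering, $\rho_j \ge \rho_i$, and there is an $n$-simplex $\sigma$ belonging to both $B(x_i, \rho_i)$ and $B(x_j, \rho_j)$. Applying Definition~\ref{def:ball-sphere} to $B(x_j, \rho_j)$ produces a vertex $w$ of $\sigma$ with $d(x_j, w) \le \rho_j - 1$, while $d(x_i, w) \le \rho_i$ since $\sigma \subset B(x_i, \rho_i)$. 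Hence
\[
d(x_i, x_j) \le d(x_i, w) + d(w, x_j) \le \rho_i + \rho_j - 1 < 2\rho_j.
\]
This strict inequality places $x_i$ inside $B(x_j, 2\rho_j)$, and a one-step expansion gives that every simplex of $H$ incident to $x_i$ has all vertices within distance $2\rho_j$ of $x_j$, hence is contained in $B(x_j, 2\rho_j)$ as well. Ranging over vertices of $H$ covers every simplex of $H$.

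I do not anticipate a substantive obstacle—this is the standard Vitali argument transplanted into the simplicial setting. The one point that needs care is that the paper's notion of $B(x, r)$ is asymmetric: an inclusion-maximal simplex belongs to the ball only if at least one of its vertices is at distance \emph{strictly} less than $r$. Accordingly, the triangle inequality has to deliver a strict bound $d(x_i, x_j) < 2\rho_j$, and it is precisely the integrality of the graph distance on the $1$-skeleton that furnishes the required one-unit margin, as displayed above.
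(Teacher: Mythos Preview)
Your proof is correct and follows the same greedy Vitali selection as the paper's one-line argument. The paper's proof literally reads ``non-decreasing order of radii,'' which appears to be a slip; your choice of non-increasing order is the one that makes the triangle-inequality step $d(x_i,x_j)\le \rho_i+\rho_j-1\le 2\rho_j-1$ go through, and your handling of the strict-inequality requirement in Definition~\ref{def:ball-sphere} is exactly the care that sentence omits.
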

\begin{proof}
Greedily add balls $B(x_i, \rho_i)$ to the non-overlapping subcollection, in the non-decreasing order of radii.
\end{proof}

\begin{lemma}[Curve factoring]
\label{lem:curve}
If a $1$-cycle $\ell$ lies in a ball of radius $r < \frac{\sys^\alpha - 1}{2}$, then $\alpha([\ell]) = 0$.
\end{lemma}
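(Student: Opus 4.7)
The plan is to decompose $\ell$ into a sum of short cycles in $M$, each of length strictly less than $\sys^\alpha$, and then use the definition of $\sys^\alpha$ to conclude that $\alpha$ vanishes on each summand. Here "short" means of length at most $2r+1$, which by the hypothesis $r < \frac{\sys^\alpha - 1}{2}$ is strictly less than $\sys^\alpha$.

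More precisely, I will proceed as follows. First, for every vertex $v$ lying in the ball $B(x,r)$, fix once and for all a shortest edge-path $\gamma_v$ from $v$ to $x$ in the $1$-skeleton of $M$. By the definition of distance, $\gamma_v$ has length at most $r$. (I do not require $\gamma_v$ to stay inside $B(x,r)$; I only use that it is an edge-path in $M$, so its edges are $1$-simplices of $M$.) Next, for each edge $e = [u,v]$ appearing in the cycle $\ell \subset B(x,r)$, form the closed loop $c_e \coloneqq \gamma_u \cdot e \cdot \gamma_v^{-1}$, which is a $1$-cycle in $M$ consisting of at most $2r+1$ edges.

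The key combinatorial identity is $\ell = \sum_{e \in \ell} c_e$ as $1$-chains mod~$2$. Writing each $c_e$ as the mod-$2$ sum of chains $e + \gamma_u + \gamma_v$ and summing over all edges $e$ of $\ell$, the edges $e$ reconstitute $\ell$, while each path $\gamma_v$ contributes once for every edge of $\ell$ incident to $v$. Since $\ell$ is a $1$-cycle, every vertex meets an even number of edges of $\ell$, so the contribution of each $\gamma_v$ cancels mod~$2$.

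Finally, because each $c_e$ has length $\le 2r+1 < \sys^\alpha$, the minimality in the definition of $\sys^\alpha$ forces $\alpha([c_e]) = 0$ for every edge $e$ of $\ell$. Therefore $\alpha([\ell]) = \sum_{e} \alpha([c_e]) = 0$, which is the desired conclusion. I do not foresee a real obstacle here: the only thing to verify carefully is that the chain identity holds mod~$2$, which is exactly where the cycle condition on $\ell$ is used, and that the paths $\gamma_v$ may freely leave $B(x,r)$ since we only need the resulting loops $c_e$ to be cycles in $M$ for the definition of $\sys^\alpha$ to apply.
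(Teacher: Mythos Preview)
Your argument is correct and is precisely the standard way to make the paper's one-line proof explicit: the paper just says ``split $\ell$ as a sum of loops of length at most $2r+1$'', and your cone-to-the-center construction $c_e = \gamma_u + e + \gamma_v$ together with the parity argument on vertex degrees is exactly how one carries this out. Your remark that the paths $\gamma_v$ need not stay in $B(x,r)$ is a fair observation but harmless, since only the bound on the number of edges of $c_e$ matters for invoking the definition of $\sys^\alpha$.
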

\begin{proof}
The cycle $\ell$ can be split as a sum of loops of length at most $2r+1$, and none of those is detected by $\alpha$.
\end{proof}

\begin{lemma}[Cut-and-paste trick]
\label{lem:cut-and-paste}
Suppose $H$ cuts $\alpha$, $x \in H$, and $r < \frac{\sys^\alpha - 1}{2}$. Then the complex $H' = H \setminus (H\cap B(x,r)) \cup S(x,r)$ cuts $\alpha$ as well. 
\end{lemma}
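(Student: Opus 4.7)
The goal is to show that for every singular $1$-cycle $\ell$ contained in $M \setminus H'$, we have $\alpha([\ell]) = 0$. The strategy is to exploit the fact that $S(x,r)$ is part of $H'$, so the sphere acts as a separator, and then handle the two resulting pieces via the two previous lemmas.

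First I would analyze the complement. Writing $H' = (H \setminus B(x,r)) \cup S(x,r)$, a short set-theoretic computation (using that $S(x,r)$ separates the interior of $B(x,r)$ from its exterior, as noted after Definition~\ref{def:ball-sphere}) gives the disjoint decomposition
\[
M \setminus H' \;=\; \bigl(B(x,r) \setminus S(x,r)\bigr) \;\sqcup\; \bigl(M \setminus (H \cup B(x,r))\bigr).
\]
Call these two open pieces $U$ and $V$ respectively. Note $U \subset B(x,r)$ and $V \subset M \setminus H$.

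Next I would split the cycle. Since each singular simplex of $\ell$ has connected image, it lies entirely in $U$ or entirely in $V$; collecting terms gives a decomposition of chains $\ell = \ell_U + \ell_V$ where $\ell_U$ is supported in $U$ and $\ell_V$ in $V$. Because $U$ and $V$ are disjoint, the boundaries $\partial \ell_U$ and $\partial \ell_V$ are supported in disjoint sets; since their sum is $\partial \ell = 0$, each must vanish individually, so $\ell_U$ and $\ell_V$ are themselves $1$-cycles in $M$.

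Finally I would apply the two lemmas. The cycle $\ell_U$ lies in the ball $B(x,r)$ of radius $r < \frac{\sys^\alpha - 1}{2}$, so Lemma~\ref{lem:curve} gives $\alpha([\ell_U]) = 0$. The cycle $\ell_V$ lies in $M \setminus H$, and by hypothesis $H$ cuts $\alpha$, so $\alpha$ restricted to $M \setminus H$ vanishes, giving $\alpha([\ell_V]) = 0$. Adding these yields $\alpha([\ell]) = 0$, as required.

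The main subtlety is the disjoint-decomposition step: one must verify that $S(x,r) \subseteq H'$ really does separate $U$ from $V$ in the geometric realization, so that a continuous $1$-chain cannot slip between the two regions without passing through $H'$. This is exactly the separation property built into Definition~\ref{def:ball-sphere} (via the piecewise-Finsler interpretation), so no additional argument is needed beyond citing it carefully.
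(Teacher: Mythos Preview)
Your proof is correct and follows the same approach as the paper's: the paper's two-line argument simply says that each connected component of a cycle in $M\setminus H'$ either avoids $H$ (handled by the cutting hypothesis) or lies in $B(x,r)$ (handled by Lemma~\ref{lem:curve}), and you have unpacked this dichotomy by explicitly writing down the disjoint decomposition $M\setminus H' = U \sqcup V$ and verifying the separation via the sphere.
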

\begin{proof}
Every connected component of a $1$-cycle in the complement of $H'$ either avoids $H$ (and then it is not detected by $\alpha$ since $H$ cuts $\alpha$) or falls inside $B(x,r)$ (and then it is not detected by $\alpha$ by Lemma~\ref{lem:curve}).
\end{proof}

\begin{proof}[Proof of Theorem~\ref{thm:sys-discrete}]
Let $V = \Vol M$, $R = \sys^\alpha(M)$, and let $H$ be a cutting subcomplex on which $\cut^{\alpha}(M)$ is attained. 

Consider all singular (that is, not necessarily simplicial) loops detected by $\alpha$. There are two cases.
\begin{enumerate}
    \item Every such loop meets the $(n-2)$-skeleton of $M$. Then $H$ can be taken to be this skeleton, $\Area H = 0$, and there is nothing to prove.
    \item There is a loop avoiding the $(n-2)$-skeleton of $M$. Deform it to the $1$-skeleton of $M$. Each of the edges of the deformed loop is incident to an $n$-dimensional simplex of $M$. If these $n$-simplices are not all distinct, we can replace the loop with a shorter one (still detected by $\alpha$). Once we make it as short as possible, all incident $n$-simplices are distinct, and we conclude that $R \le V$. We use this inequality in what follows.
\end{enumerate}

The second inequality we observe is $\Area H \le (n+1)V$. Indeed, consider an $(n-1)$-simplex in $H$. If it is not incident to $n$-simplices in $M$, we can safely remove it from $H$ (but leave its boundary) preserving the cutting property of $H$. Therefore, to every $(n-1)$-simplex of $H$ we can assign an incident $n$-simplex of $M$, and it follows that $\Area H \le (n+1)V$.

If $\Area H \ge R^{1/\eps}$, then 
\[
R \cdot \Area H \le (\Area H)^{1+\eps} \le (n+1)^{1+\eps} V^{1+\eps}, 
\]
so in the rest of the argument we assume that $\Area H < R^{1/\eps}$.

For brevity, we write $v(x,r) = \Vol B(x,r)$ and $h(x,r) = \Area(H \cap B(x,r))$.

A ball centered at $x \in H$ of positive radius divisible by $2$ will be called \emph{good} if 
\[
h(x,r) \le \frac{4}{R^{1-\eps}} v(x,r/2).
\]
A sufficient condition for a ball of radius $0 < r \le R$ divisible by $4$ to be good is
\[
h(x,r) \le \frac{r}{R^{1-\eps}} h(x,r/4).
\]
Indeed, by the co-area inequality (Lemma~\ref{lem:coarea}) one has
\begin{align*}
h(x,r) &\le \frac{r}{R^{1-\eps}} \frac{1}{r/2 - r/4} \sum_{t = r/4}^{r/2-1}  h(x,t) \\
&\le \frac{r}{R^{1-\eps}} \frac{4}{r} \sum_{t = r/4}^{r/2-1}  \Area S(x,t) \\
&\le \frac{4}{R^{1-\eps}} v(x,r/2).
\end{align*}
Here we used the minimality of $H$, which implies $h(x,t) \le \Area S(x,t)$, since otherwise we can apply the cut-and-paste trick (Lemma~\ref{lem:cut-and-paste}), decreasing the area of $H$ and preserving its cutting property.

We will prove below that for every $x \in H$ there is a good ball centered at $x$. Having a cover of $H$ by good balls, we pick a Vitali subcover $\bigcup B(x_i, 2r_i) \supset H$ such that the balls $B(x_i, r_i)$ do not overlap (by Lemma~\ref{lem:vitali}). This will conclude the proof:
\[
R \cdot \Area H \le R \sum h(x_i, 2r_i) \le 4 R^\eps \sum v(x,r_i) \le 4 V^{1+\eps}.
\]

Now we show that around every $x \in H$ one can place a good ball. In the range $[R^{1-\eps}, R]$, pick a longest sequence of integers of the form $r_0 = 4^m, r_1 = 4^{m+1}, \ldots, r_N = 4^{m+N}$. We assume it has at least two elements ($R^\eps \ge 4^3$), since for small $R$ the statement of the theorem holds vacuously. Observe that $4^N = \frac{r_N}{r_0} > \frac{R^\eps}{16}$. If for some $1 \le i\le N$ the ball $B(x, r_i)$ obeys the estimate
\[
h(x,r_i) \le \frac{r_i}{R^{1-\eps}} h(x,r_i/4),
\]
then we found a good ball, so assume these estimates all fail. If $h(x,r_0) = 0$, then $B(x, r_0)$ is good. If $h(x,r_0) \ge 1$, we have the following: 
\begin{align*}
R^{1/\eps} &> \Area H \ge h(x, r_N) \\
&> 4^N h(x,r_{N-1}) \\
&> 4^N 4^{N-1} h(x,r_{N-2}) \\ 
&> \ldots \\
&> 4^N 4^{N-1} \ldots 4^1 h(x,r_0) \\ 
&\ge \left(4^N\right)^{(N+1)/2} \\
&> \left(\frac{R^\eps}{16}\right)^{(\eps \log_4 R -1)/2}.
\end{align*}
This inequality fails for all large $R\ge R_0(\eps)$, but for all $R \le R_0(\eps)$ the statement of the theorem holds trivially.
\end{proof}

\begin{remark}
A careful analysis of this proof shows that the growth rate $V^{1+\eps}$ can be replaced by 
$V \exp(2(\log V)^{1/\sqrt{2}})$.
This function grows slower than any power $V^{1+\eps}$. Unfortunately, this proof cannot be refined to give the growth rate $V \polylog V$, as it is tempting to conjecture.
\end{remark}

In fact, Theorem~\ref{thm:sys-discrete} holds (with the same proof) for any coefficient ring $A$, not just $\mathbb{Z}/2$, if we define the systole $\sys^\alpha(M)$ as the shortest edge-length of a loop detected by $\alpha \in H^1(M; A)$, and define $\cut^{\alpha}(M)$ in the same way as with $\mathbb{Z}/2$ coefficients. For example, if $A = \mathbb{Z}/p$, then $\sys^\alpha$ might capture some loops not detected by the $\mathbb{Z}/2$-cohomology, but in return, the cutting complexes from the definition of $\cut^\alpha$ will have to cut those loops as well. However, we need to assume $A = \mathbb{Z}/2$ in order to relate $\cut^\alpha$ to the systolic invariants (and finish the proof of Theorem~\ref{thm:sys-first}). We do it for general $k$, as we will need it in the next section. 

\begin{lemma}[Minimality forces regularity]
\label{lem:regularity}
Let $\alpha \in H^k(M; \mathbb{Z}/2)$ be a non-trivial cohomology class, and let $H$ be an $(n-k)$-dimensional subcomplex in $M$ on which $\cut^\alpha(M)$ is attained. If $M$ is a manifold, then the top-dimensional simplices of $H$ form an $(n-k)$-cycle representing $\alpha^*$. Hence, $\cut^\alpha(M) = \sys_{\alpha^*}(M)$.
\end{lemma}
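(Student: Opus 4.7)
The plan is to combine Poincar\'e--Lefschetz duality for the pair $(M, M\setminus H)$ with the minimality of $H$; throughout, coefficients are in $\mathbb{Z}/2$.

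First, I would extract a cycle from $H$ via duality. Because $H$ cuts $\alpha$, the restriction $H^k(M) \to H^k(M\setminus H)$ kills $\alpha$, so the long exact sequence of the pair lifts $\alpha$ to some $\tilde\alpha \in H^k(M, M\setminus H;\mathbb{Z}/2)$. For the closed subcomplex $H$ in the manifold $M$, Poincar\'e--Lefschetz duality provides
\[
H^k(M, M\setminus H; \mathbb{Z}/2) \cong H_{n-k}(H; \mathbb{Z}/2);
\]
the cleanest justification for an arbitrary subcomplex is to pass to the regular neighborhood $N$ of $H$, a compact PL manifold with boundary that deformation retracts to $H$, and then apply excision followed by Lefschetz duality for $(N,\partial N)$. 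Let $[z]\in H_{n-k}(H;\mathbb{Z}/2)$ correspond to $\tilde\alpha$. Since $\dim H = n-k$, the chain group $C_{n-k+1}(H)$ vanishes, so $[z]$ has a unique chain representative $z = \sum_{\tau\in S}\tau$, where $S$ is some subset of the top-dimensional simplices of $H$ whose mod-$2$ boundary is zero. By naturality of the duality, the pushforward of $[z]$ to $H_{n-k}(M;\mathbb{Z}/2)$ equals $\alpha^*$.

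Next, I would invoke minimality to force $S$ to exhaust all of $H$'s top-dimensional simplices. Let $H'\subseteq H$ be the subcomplex spanned by the simplices in $S$ together with all their faces; its number of top-dimensional simplices is exactly $|S|$. The cycle $z$ is supported in $H'$, so $[z]\in H_{n-k}(H';\mathbb{Z}/2)$; applying the same duality to the pair $(M, M\setminus H')$ shows that this class corresponds to one in $H^k(M, M\setminus H';\mathbb{Z}/2)$ whose image in $H^k(M)$ is still $\alpha$, by the naturality square for the inclusion $H'\hookrightarrow H$. Hence $H'$ also cuts $\alpha$, and the minimality of $H$ yields
\[
\cut^\alpha(M) \le |S| \le \cut^\alpha(M),
\]
where the right-hand inequality uses $S \subseteq \{\text{top-dim simplices of }H\}$ and the fact that $\cut^\alpha(M)$ equals the number of top-dim simplices of $H$. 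Therefore $S$ is the full set of top-dimensional simplices of $H$, their mod-$2$ sum equals the cycle $z$, and this cycle represents $\alpha^*$. The equality $\cut^\alpha(M) = \sys_{\alpha^*}(M)$ then follows, since the reverse inequality $\cut^\alpha(M) \le \sys_{\alpha^*}(M)$ is the easy observation (already noted in the text) that any simplicial representative of $\alpha^*$ is a cutting complex, via Poincar\'e duality read as intersection pairing.

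I expect the main obstacle to be stating Poincar\'e--Lefschetz duality and its naturality cleanly for an \emph{arbitrary} simplicial subcomplex of a triangulated manifold, rather than for a submanifold. The regular-neighborhood reduction above is standard but does require $M$ to be PL (or combinatorial) so that $N$ is genuinely a manifold with boundary; once that framework is in place, confirming that the square of duality isomorphisms for $H$ and $H'$ commutes with the inclusion-induced maps on both sides is routine.
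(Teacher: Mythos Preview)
Your argument is correct, but it differs from the paper's in how the cycle inside $H$ is produced. The paper never invokes Poincar\'e--Lefschetz duality for the pair $(M, M\setminus H)$; instead it argues by contradiction using only the Lusternik--Schnirelmann lemma and linear algebra. Assuming $\alpha^* \notin i_*(H_{n-k}(H))$, one finds (via universal coefficients) a class $\beta \in H^{n-k}(M)$ with $\beta(\alpha^*)\neq 0$ but $\beta\vert_H = 0$; then $\alpha\smile\beta\neq 0$ while $\alpha\vert_{M\setminus H}=0$ and $\beta\vert_H=0$, contradicting Lusternik--Schnirelmann. This yields a cycle in $H$ representing $\alpha^*$, and minimality finishes exactly as in your last step.

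Your route is more constructive: you actually name the cycle as the Lefschetz dual of a lift $\tilde\alpha$, and the second application of duality (to $H'$) gives a clean reason why the support of that cycle already cuts $\alpha$. The cost is the machinery you flag at the end---setting up duality and its naturality for an arbitrary subcomplex via regular neighborhoods---whereas the paper's proof needs nothing beyond the cup-product vanishing lemma already used elsewhere. Conversely, the paper's argument is nonconstructive and leans on $\mathbb{Z}/2$ being a field (for the hyperplane/universal-coefficients step), while your duality approach would in principle identify the support of a representing cycle over any coefficient ring, recovering the remark after the lemma more transparently.
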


\begin{proof} 
It suffices to show that $H$ contains an $(n-k)$-cycle representing $\alpha^*$; if so, then this cycle is among the subcomplexes defining $\cut^{\alpha}$, and so by minimality it is equal to $H$.  Let $i \co H \hookrightarrow M$ denote the inclusion, and suppose for the sake of contradiction that $i_*(H_{n-k}(H))$ does not contain $\alpha^*$.  Then there is some hyperplane of $H_{n-k}(M)$ that contains $i_*(H_{n-k}(H))$ but not $\alpha^*$.  By the universal coefficient theorem, this hyperplane can be expressed as pairing with some $\beta \in H^{n-k}(M)$, for which $\beta(\alpha^*) \neq 0$. In this case $\beta\vert_{H} = 0$ while $\alpha \smile \beta\neq 0$, contradicting the Lusternik--Shnirelmann lemma because we have assumed that $\alpha \vert_{M \setminus H} = 0$.
\end{proof}

Note that the same proof shows that for any field coefficients, there is a cycle representing $\alpha^*$ with support $H$.  However, unless the coefficients are $\mathbb{Z}/2$ we do not control the coefficient of each simplex in $H$.  Only with $\mathbb{Z}/2$ coefficients does it make sense to say that $H$ itself forms a cycle achieving $\sys_{\alpha^*}(M)$. This completes the proof of Theorem~\ref{thm:sys-first}.

\begin{example}
\label{ex:sys<cut}
We construct a triangulated manifold $M$ of dimension $n$ with a cohomology class $\alpha \in H^1(M; \mathbb{Z}/2)$ such that $\sys^{\beta}(M) < \cut^{\alpha}(M)$ for any class $\beta \in H^{n-1}(M; \mathbb{Z}/2)$ with $\alpha \smile \beta \neq 0$. 
Let $M_1$ and $M_2$ be large identical triangulated copies of $S^1 \times S^{n-1}$.  We form $M$ as the connected sum of $M_1$ and $M_2$ by removing an $n$-simplex from each and identifying their boundaries.  We assume $n \geq 3$, although the argument is easily adapted for $n=2$.  For $n \geq 3$, the Mayer--Vietoris sequence implies 
\[H_{n-1}(M; \mathbb{Z}/2) \cong H_{n-1}(M_1; \mathbb{Z}/2) \oplus H_{n-1}(M_2; \mathbb{Z}/2) \cong \mathbb{Z}_2 \times \mathbb{Z}_2.\]
We select $\alpha \in H^1(M; \mathbb{Z}/2)$ such that $\alpha^*$ is the class $(1, 1)$ in $H_{n-1}(M; \mathbb{Z}_2)$.  Given any cycle representing $\alpha^*$, we may split it into an $M_1$ part and an $M_2$ part and add faces from the common boundary of the $n$-simplex to each part, to obtain a nontrivial cycle on each of $M_1$ and $M_2$.  Thus we have
\[\cut^{\alpha}(M) = \sys_{\alpha^*}(M) \geq 2\sys_{n-1}(M_1) - 2(n+1).\]
On the other hand, for any $\beta \in H^{n-1}$ such that $\beta$ pairs nontrivially with $\alpha^* = (1, 1)$, either $\beta$ pairs nontrivially with $(1, 0)$ or $\beta$ pairs nontrivially with $(0, 1)$.  Each of $(1, 0)$ and $(0, 1)$ in $H_{n-1}(M; \mathbb{Z}/2)$ can be represented by a cycle of size $\sys_{n-1}(M_1)$, so we have
\[\sys^{\beta}(M) \leq \sys_{n-1}(M_1) < \cut^{\alpha}(M).\]

\end{example}

\section{Generalizations}
\label{sec:generalize}

The following result follows from Theorem~\ref{thm:sys-discrete} by induction.

\begin{theorem}
\label{thm:sys-induction}
For each $\eps > 0$, there is a constant $C = C(n,\eps)$ such that the following holds. Let $M$ be a simplicial $n$-complex with cohomology classes $\alpha_1, \ldots, \alpha_k \in H^1(M; \mathbb{Z}/2)$ whose cup-product is non-zero. Then
\[
\sys^{\alpha_1}(M) \cdot \ldots \cdot \sys^{\alpha_k}(M) \cdot \cut^{\alpha_1\smile \ldots \smile \alpha_k}(M) \le C \Vol(M)^{1+\eps}.
\]
\end{theorem}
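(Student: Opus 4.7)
The plan is to induct on $k$, with the base case $k=1$ being exactly Theorem~\ref{thm:sys-discrete}. Given the inductive hypothesis for $k-1$ and classes $\alpha_1, \ldots, \alpha_k \in H^1(M; \mathbb{Z}/2)$ with $\alpha_1 \smile \cdots \smile \alpha_k \neq 0$, the strategy is to produce two nested cutting subcomplexes: a hypersurface $H$ cutting $\alpha_1$, and a codimension-$k$ subcomplex $H' \subset H$ cutting the restricted $(k-1)$-fold cup product inside $H$. The bulk of the work is then to verify that this $H'$ also cuts the full $k$-fold cup product in $M$.

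Apply Theorem~\ref{thm:sys-discrete} to $\alpha_1$ alone to obtain an $(n-1)$-dimensional minimizing cutting subcomplex $H \subset M$ with
\[
\sys^{\alpha_1}(M) \cdot \Area(H) \le c(n,\eps') \Vol(M)^{1+\eps'}.
\]
Since $\alpha_1$ vanishes on $M \setminus H$ while $\alpha_1 \smile (\alpha_2 \smile \cdots \smile \alpha_k) \neq 0$, the Lusternik--Schnirelmann lemma forces $(\alpha_2 \smile \cdots \smile \alpha_k)\vert_H \neq 0$ in $H^{k-1}(H; \mathbb{Z}/2)$. Apply the inductive hypothesis to the $(n-1)$-complex $H$ with the restricted classes $\alpha_2\vert_H, \ldots, \alpha_k\vert_H$ (with exponent $\eps'$), producing an $(n-k)$-dimensional minimizing subcomplex $H' \subset H$ satisfying
\[
\prod_{j=2}^{k} \sys^{\alpha_j\vert_H}(H) \cdot \cut^{(\alpha_2 \smile \cdots \smile \alpha_k)\vert_H}(H) \le C(k-1,\eps') \Area(H)^{1+\eps'}.
\]

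The key step (and main obstacle) is upgrading $H'$ from a cutter inside $H$ to a cutter for $\alpha_1 \smile \cdots \smile \alpha_k$ in $M$. To this end, decompose $M \setminus H' = U \cup V$ into two open sets: $U = M \setminus H$, on which $\alpha_1$ vanishes by construction, and $V$ equal to a regular neighborhood of $H \setminus H'$ in $M$ disjoint from $H'$ (constructed, for example, as a regular neighborhood of $H$ with a small closed regular neighborhood of $H'$ removed). Since $V$ deformation retracts onto $H \setminus H'$, the restriction $(\alpha_2 \smile \cdots \smile \alpha_k)\vert_V$ agrees with $(\alpha_2 \smile \cdots \smile \alpha_k)\vert_{H \setminus H'}$ and hence vanishes. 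The cohomological Lusternik--Schnirelmann lemma then yields $(\alpha_1 \smile \cdots \smile \alpha_k)\vert_{M \setminus H'} = 0$, so $\cut^{\alpha_1 \smile \cdots \smile \alpha_k}(M) \le \cut^{(\alpha_2 \smile \cdots \smile \alpha_k)\vert_H}(H)$.

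Finally, combine this with the trivial bound $\sys^{\alpha_j}(M) \le \sys^{\alpha_j\vert_H}(H)$ (any $1$-cycle in $H$ is a $1$-cycle in $M$) and eliminate $\Area(H)$ via the base-case estimate to obtain
\[
\prod_{j=1}^{k} \sys^{\alpha_j}(M) \cdot \cut^{\alpha_1 \smile \cdots \smile \alpha_k}(M) \le C'(n,k,\eps') \, \Vol(M)^{(1+\eps')^2}.
\]
Choosing $\eps'$ small enough that the iterated exponent $(1+\eps')^k$ stays below $1+\eps$ closes the induction.
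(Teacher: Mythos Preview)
Your proof is correct and follows the same approach as the paper: induction on $k$ with base case Theorem~\ref{thm:sys-discrete}, Lusternik--Schnirelmann to pass to the minimizing cutter $H$, and the comparison $\cut^{\alpha_1\smile\cdots\smile\alpha_k}(M)\le\cut^{(\alpha_2\smile\cdots\smile\alpha_k)\vert_H}(H)$ together with the trivial systole comparison. Your explicit $U\cup V$ decomposition of $M\setminus H'$ is precisely the Lusternik--Schnirelmann argument the paper leaves to the reader; the only cosmetic slips are that the inductive constant should read $C(n-1,\eps')$ rather than $C(k-1,\eps')$, and that a single inductive step already yields exponent $(1+\eps')^2$ (so your more conservative requirement $(1+\eps')^k\le 1+\eps$ is sufficient but not needed).
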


We remark that Lemma~\ref{lem:regularity} applies and gives, in the case when $M$ is a manifold, 
\[
\cut^{\alpha_1\smile \ldots \smile \alpha_k}(M) =\sys_{(\alpha_1\smile \ldots \smile \alpha_k)^*}(M).
\]

Results very close to Theorem~\ref{thm:sys-induction} for $k=n-1$ have been known even with $\eps=0$ (cf.~\cite[Theorem~3]{guth2010systolic},  \cite[Theorem~1]{balacheff2016length}, \cite[Theorem~1.18]{avvakumov2021systolic}). For $k\le n-2$ one cannot plug $\eps=0$. 

\begin{proof}[Proof of Theorem~\ref{thm:sys-induction}] Assume $\eps < 1$.
Take $H$ to be a minimal $(n-1)$-subcomplex of $M$ such that $\alpha_1$ vanishes when restricted to $M \setminus H$. The proof in Section~\ref{sec:proof} implies that
\[
\sys^{\alpha_1}(M) \cdot \Area(H) \le c(n,\eps/3)\Vol(M)^{1+\eps/3}.
\]

The Lusternik--Schnirelmann lemma tells us that the product of the remaining $\alpha_i$ is non-trivial on $H$; hence, the induction assumption applies for $H$ in place of $M$. The systoles in $H$ dominate the corresponding systoles in $M$, since the intrinsic distances in $H$ dominate the extrinsic ones: $\sys^{\alpha_i}(M) \le \sys^{\alpha_i\vert_H}(H)$, $2 \le i \le k$. Any $(n-k)$-dimensional subcomplex of $H$ that cuts $\alpha_2\smile\ldots\smile\alpha_k\vert_H$ also cuts $\alpha_1\smile\ldots\smile\alpha_k$ in $M$, as one can deduce from the Lusternik--Schnirelmann lemma, so $\cut^{\alpha_1\smile \ldots \smile \alpha_k}(M) \le \cut^{\alpha_2\smile \ldots \smile \alpha_k\vert_H}(H)$.
Assembling this all together, we get
\begin{align*}
\sys^{\alpha_1}(M) &\cdot \sys^{\alpha_2}(M) \cdot \ldots \cdot \sys^{\alpha_k}(M) \cdot \cut^{\alpha_1\smile \ldots \smile \alpha_k}(M) \\
&\le \sys^{\alpha_1}(M) \cdot \sys^{\alpha_2\vert_H}(H) \cdot \ldots \cdot \sys^{\alpha_k\vert_H}(H) \cdot \cut^{\alpha_2\smile \ldots \smile \alpha_k\vert_H}(H) \\
&\le \sys^{\alpha_1}(M) \cdot C(n-1,\eps/3) \Area(H)^{1+\eps/3} \\
&\le C(n-1,\eps/3)^{1+\eps/3} (\sys^{\alpha_1}(M) \cdot \Area(H))^{1+\eps/3} \\ 
&\le  C(n-1,\eps/3)^{1+\eps/3}  (c(n,\eps/3) \Vol(M)^{1+\eps/3})^{1+\eps/3} \\
&\le C(n,\eps) \Vol(M)^{1+\eps}.
\end{align*}
\end{proof}

Another version of the same theorem holds in the continuous setting, with the assumption of locally bounded geometry replaced by its macroscopic cousin. We work in the setting of \emph{Riemannian polyhedra}, that is, finite pure\footnote{In a \emph{pure} simplicial $n$-complex, a simplex of any dimension is contained in an $n$-dimensional simplex.} simplicial complexes whose top-dimensional faces are endowed with Riemannian metrics matching on the common faces. We say that a subspace $H \subset M$ is a $k$-dimensional \emph{subpolyhedron} if it admits the structure of a finite pure $k$-dimensional simplicial complex whose cells are embedded smoothly in the cells of $M$. The piecewise Riemannian metric is inherited from the ambient polyhedron, allowing one to measure intrinsic distances and volumes. Any subpolyhedron $H$ admits a thin tubular neighborhood that deformation retracts onto $H$, and the Lusternik--Schnirelmann lemma for cohomology is applicable in the following form: if $\alpha\vert_{M\setminus H} = 0$, $\beta\vert_H = 0$, then $\alpha \smile \beta$ vanishes as a cohomology class of $M$. The systole $\sys^\alpha(M)$ is the infimum of the $k$-volumes of all (piecewise smooth or Lipschitz) $k$-cycles detected by $\alpha \in  H_k(M; \mathbb{Z}/2)$. The cutting area $\cut^\alpha(M)$ is the infimum of the $k$-volumes of all those $k$-dimensional subpolyhedra $H \subset M$ that cut $\alpha$ in the sense that $\alpha$ vanishes on $M\setminus H$.

\begin{theorem}
\label{thm:sys-macroscopic}
For each $\eps, v_1, v_2 > 0$, there is a constant $C = C(\eps, v_1,v_2)$ such that the following holds. Let $M$ be a compact Riemannian $n$-polyhedron such that $\sys_1(M) \ge 2$, every ball of radius $1/4$ has volume at least $v_1$, and every ball of radius $1$ has volume at most $v_2$. Let $\alpha_1, \ldots, \alpha_k \in H^1(M; \mathbb{Z}/2)$ be cohomology classes with non-zero cup-product. Then
\[
\sys^{\alpha_1}(M) \cdot \ldots \cdot \sys^{\alpha_k}(M) \cdot \cut^{\alpha_1\smile \ldots \smile \alpha_k}(M) \le C \Vol(M)^{1+\eps}.
\]
\end{theorem}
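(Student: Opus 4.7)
Plan:

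The strategy is to discretize $M$ via the nerve of a net and invoke Theorem~\ref{thm:sys-induction}, in parallel to the reduction of Theorem~\ref{thm:sys-continuous} to Theorem~\ref{thm:sys-first} outlined in Section~\ref{sec:intro}. The bi-Lipschitz triangulation used there requires curvature bounds that are unavailable under the macroscopic hypotheses, and a purely metric nerve construction takes its place.

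Pick a maximal $\delta$-separated set $\{p_i\}_{i=1}^m \subset M$ with $\delta = 1/4$. The balls $B(p_i, \delta)$ are disjoint of volume at least $v_1$, so $m \le v_1^{-1}\Vol(M)$, while the balls $B(p_i, 2\delta)$ cover $M$. The upper bound $v_2$ ensures that at most $C_0 = C_0(v_1, v_2)$ of the $B(p_i, 2\delta)$ overlap at any point (their disjoint $\delta$-parts fit inside a single ball of radius $1$). Let $N$ be the nerve of $\{B(p_i, 2\delta)\}$; then $\dim N \le C_0 - 1$, and $N$ has at most $C_1(v_1, v_2)\Vol(M)$ simplices in each dimension. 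Since $\sys_1(M) \ge 2$, every $\alpha_i$ is trivial on loops of length less than $2$, hence on each $B(p_i, 2\delta)$; the standard nerve map $\varphi\colon M \to |N|$ built from a partition of unity then admits classes $\tilde\alpha_i \in H^1(N; \mathbb{Z}/2)$ with $\varphi^*\tilde\alpha_i = \alpha_i$ and non-zero cup product $\tilde\alpha_1 \smile \cdots \smile \tilde\alpha_k$.

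Next, compare systolic invariants. Each edge of $N$ joins net points at distance at most $4\delta$, so a combinatorial $1$-cycle in $N$ of $\ell$ edges detected by $\tilde\alpha_i$ lifts to a piecewise-geodesic loop in $M$ of length at most $4\delta\ell$ detected by $\alpha_i$, giving $\sys^{\alpha_i}(M) \le 4\delta\,\sys^{\tilde\alpha_i}(N)$. For the cut invariant, given an $(n-k)$-subpolyhedron $H \subset M$ cutting $\alpha_1 \smile \cdots \smile \alpha_k$, define $\tilde H \subset N$ to be the subcomplex of those simplices all of whose corresponding balls meet $H$. Then $\varphi^{-1}(|N| \setminus \tilde H) \subset M \setminus H$, so $\tilde H$ cuts $\tilde\alpha_1 \smile \cdots \smile \tilde\alpha_k$ in $N$; a covering-number estimate shows that the number of $(\dim N - k)$-simplices of $\tilde H$ is at most $C_2(v_1, v_2)\Vol_{n-k}(H)$, so $\cut^{\tilde\alpha_1 \smile \cdots \smile \tilde\alpha_k}(N) \le C_2 \cut^{\alpha_1 \smile \cdots \smile \alpha_k}(M)$.

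Finally, Theorem~\ref{thm:sys-induction} applied to $N$ with dimension parameter at most $C_0 - 1$ gives
\[
\sys^{\tilde\alpha_1}(N)\cdots\sys^{\tilde\alpha_k}(N)\cdot\cut^{\tilde\alpha_1 \smile \cdots \smile \tilde\alpha_k}(N) \le C(C_0 - 1, \eps)\Vol(N)^{1+\eps}.
\]
Combining with the comparisons above and absorbing all $v_1, v_2$-dependent constants into $C(n, \eps, v_1, v_2)$ proves the theorem.

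The main obstacle I anticipate is the topological reduction: verifying the partial nerve-lemma statement (that the classes $\alpha_i$ and their cup products in mod-$2$ degrees up to $k$ factor through $\varphi$) using only the coarse hypothesis $\sys_1(M) \ge 2$, without the contractibility of balls one would normally demand. A secondary technical issue is reconciling the dimensional mismatch between $M$ and $N$: the nerve $N$ has dimension $C_0 - 1$ rather than $n$, so the cutting subcomplex in $N$ has codimension $k$ relative to $C_0 - 1$, and the skeletal and covering-number arguments must be aligned accordingly.
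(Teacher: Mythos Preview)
Your reduction has the cut comparison running in the wrong direction. You establish $\cut^{\tilde\alpha}(N)\le C_2\,\cut^{\alpha}(M)$ by pushing a continuous cut $H\subset M$ forward to a combinatorial $\tilde H\subset N$. But to deduce the theorem from the discrete inequality on $N$ you need the \emph{opposite} bound $\cut^{\alpha}(M)\le C\cdot\cut^{\tilde\alpha}(N)$: given a small combinatorial cut $\tilde H$ for $\tilde\alpha$ in $N$, produce a small $(n-k)$-dimensional subpolyhedron of $M$ that cuts $\alpha$. The nerve map $\varphi\colon M\to|N|$ points the wrong way for this; $\varphi^{-1}(\tilde H)$ is full-dimensional in $M$, and there is no evident way to carve out of it an $(n-k)$-subpolyhedron of controlled volume, especially since $\dim N=C_0-1$ bears no relation to $n$. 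What you flag as a ``secondary technical issue'' is in fact the primary obstruction. Even the inequality you do assert is doubtful: an $(n-k)$-polyhedron $H$ of small volume can meet arbitrarily many net balls, so the claimed covering-number bound on the simplex count of $\tilde H$ has no proof under the stated hypotheses.

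The paper avoids all of this by working directly in the continuous setting rather than discretizing. It proves the case $k=1$ by rerunning the good-ball/Vitali argument of Theorem~\ref{thm:sys-discrete} for a near-minimizing cutting subpolyhedron $H$: the lower volume bound $v_1$ gives $R\lesssim V$ and seeds the good-ball estimate at radius $1$, while the upper bound $v_2$ together with co-area gives $A\lesssim V$ via a cover by small spheres. The induction on $k$ then proceeds via Lusternik--Schnirelmann exactly as in Theorem~\ref{thm:sys-induction}, and no comparison between combinatorial and Riemannian cut invariants is ever needed.
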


Note that this theorem implies Theorem~\ref{thm:sys-macrocontinuous}.

\begin{proof}[Proof of Theorem~\ref{thm:sys-macroscopic}]
It suffices to prove this for $k=1$ and then induct. In the case $k=1$, let $\alpha = \alpha_1$, $R = \sys^\alpha (M)$, $V = \Vol M$, and proceed along the same scheme as in the proof of Theorem~\ref{thm:sys-discrete}. Trivially, $2v_1 \lfloor R\rfloor \le V$ (think of a necklace made of beads of radius $1/4$ along the systole), so $R \lesssim V$, where $\lesssim$ indicates an inequality that holds up to a factor depending on $\eps, v_1, v_2$. Let $A$ be the infimum of areas of $(n-1)$-subpolyhedra such that $\alpha$ vanishes on their complement. We need to show that $R \cdot A \lesssim V^{1+\eps}$. If $A \ge R^{1/\eps}$, then, as before,
\[
R \cdot A \le A^{1+\eps} \lesssim V^{1+\eps}, 
\]
but the last inequality requires an explanation. Pick a set of disjoint balls of radius $1/4$ such that the concentric balls of radius $3/4$ cover $M$, and for each of these find a concentric sphere of radius between $3/4$ and $1$ that has area at most $4v_2$ (by the co-area inequality). The union of these spheres has area at most $\frac{4v_2 V}{v_1}$, and $\alpha$ vanishes on the complement of this union by the curve factoring lemma (here we use the assumption $\sys_1(M) \ge 2$). Therefore, $A \le \frac{4v_2 V}{v_1}$. 

From now on assume that $A < R^{1/\eps}$.
Let $H$ be an almost-minimizing subpolyhedron for which $\alpha$ vanishes on its complement, with $A \le \Area H < A + \delta$, where $\delta = \frac{v_1}{R}$. 


A ball centered at $x \in H$ of radius $r \in [1,R]$ will be called \emph{good} if 
\[
h(x,r) \le \frac{13}{R^{1-\eps}} v(x,r/3),
\]
where we use again notation $v(x,r) = \Vol B(x,r)$ and $h(x,r) = \Area(H \cap B(x,r))$.

It suffices to show that every $x \in H$ is the center of a good ball. Once that is done, then we pick a Vitali cover of $H$ by good balls $B(x_i, r_i)$, such that the balls $B(x_i, r_i/3)$ do not intersect, and conclude just like in Section~\ref{sec:proof}:
\[
R \cdot A \le R \sum h(x_i, r_i) \le 13 R^\eps \sum v(x,r_i/3) \le 13 R^\eps V \lesssim V^{1+\eps}.
\]

We start by looking at balls $B(x,R)$, $B(x,R/4)$, $\ldots$, $B(x,R/4^N)$, where $N = \lfloor \eps \log_4 R \rfloor$ is the integer defined by $R/4^{N+1} < R^{1-\eps} \le R/4^N$, or equivalently, $4^N \le R^\eps < 4^{N+1}$ (note that $N \ge 0$ since $R > 1$).
If at least one of these balls obeys the inequality
\[
h(x,r) \le \frac{r}{R^{1-\eps}} h(x,r/4),
\]
then it is good, since
\begin{align*}
\frac{r}{R^{1-\eps}} h(x,r/4) &\le \frac{12}{R^{1-\eps}} \int_{r/4}^{r/3} (\Area S(x,t) + \delta) \; dt \\
&\le \frac{12}{R^{1-\eps}} v(x,r/3) + R^\eps \delta \\
&\le \frac{13}{R^{1-\eps}} v(x,r/3).    
\end{align*}

If none of them are good, then we have for each $0\le m \le N-1$:
\[
h(x,R/4^m) > \frac{R/4^m}{R^{1-\eps}} h(x,R/4^{m+1}) \ge 4^{N-m} h(x,R/4^{m+1}),
\]
and assembling these, 
\begin{align*}
    R^{1/\eps} &> A \ge h(x, R) \\
    &> 4^N h(x,R/4)  \\
    &> \ldots \\
    &> 4^N \ldots 4^1 h(x,R/4^N)  \\
    &\ge (4^{N+1})^{\frac{N}{2}} h(x,R^{1-\eps}) \\
    &> R^{\eps(\eps \log_4 R - 1)/2} h(x,1).
\end{align*}

The function $R^{\eps(\eps \log_4 R - 1)/2}$ grows (in $R$) faster than $R^{1-\eps + 1/\eps}$, so for $R \ge R_0(\eps, v_1)$ large enough we have
\[
h(x,1) < R^{1/\eps - \eps(\eps \log_4 R - 1)/2} < \frac{13v_1}{R^{1-\eps}} \le \frac{13 v(x,1/3) }{R^{1-\eps}},
\]
proving that the ball $B(x, 1)$ is good. For $R < R_0(\eps, v_1)$ the statement of the theorem follows from the above-mentioned observation $A \lesssim V$.
\end{proof}

\bibliography{sys}
\bibliographystyle{amsalpha}
\end{document}